\newcommand{\R}{{\mathbb R}}
\newcommand{\ds}{\displaystyle}
\newcommand{\es}{\mathop{\rm essinf}\limits}
\newcommand{\ol}{\overline}
\newcommand{\B}{{\mathcal  B}}
\newcommand{\M}{{\mathcal M}}
\newcommand{\ba}{{\mathbf a}}
\newcommand{\bb}{{\mathbf b}}
\newcommand{\bc}{{\mathbf c}}
\newcommand{\bF}{{\mathbf F}}
\newcommand{\divv}{{\rm div\,}}
\newcommand{\tl}{\tilde}
\newcommand{\diam}{{\rm diam\,}}
\newcommand{\cP}{{\mathcal P}}
\newcommand{\bG}{{\mathbf  G}}
\numberwithin{equation}{section}
\def\Xint#1{\mathchoice
    {\XXint\displaystyle\textstyle{#1}}%
     {\XXint\textstyle\scriptstyle{#1}}%
     {\XXint\scriptstyle\scriptscriptstyle{#1}}%
     {\XXint\scriptstyle\scriptscriptstyle{#1}}%
    \!\int}
\def\XXint#1#2#3{{\setbox0=\hbox{$#1{#2#3}{\int}$}
    \vcenter{\hbox{$#2#3$}}\kern-.5\wd0}}
\newtheorem{thm}{Theorem}[section]  
\newtheorem{lem}[thm]{Lemma}           
\newtheorem{defin}[thm]{Definition}    
\begin{document}



\title[ Asymptotically regular operators in generalized Morrey spaces]{ Asymptotically regular operators in generalized Morrey spaces}

\author[S.-S. Byun]{Sun-Sig Byun}
\author[L. Softova]{Lubomira  Softova}

\begin{abstract}
We   obtain Calder\'on-Zygmund  type estimates in generalized Morrey
spaces for nonlinear equations   of  $p$-Laplacian type.   Our
result is obtained under minimal regularity assumptions both on the
operator  and on the   domain. This result allows us to study
asymptotically regular operators. As a byproduct, we obtain also
generalized H\"older regularity of the solutions under some
minimal restrictions of the weight
functions.
\end{abstract}

\maketitle

\section{Introduction}\label{sec1}

In the  present work we obtain {\it  global  Calder\'on-Zygmund  type estimates}   in
  {\it generalized Morrey spaces} $L^{q,\varphi}(\Omega)$  for operators of the kind
$$
\divv \ba(x,Du)=\divv(|\bF|^{p-2}\bF)\qquad \text{ in } \    \Omega
$$
for some fixed $p\in(1,\infty)$ and bounded domain $\Omega $ with a
rough boundary. Supposing first that $\ba(x,\xi)$ is
 a {\it  regular elliptic operator of $p$-Laplacian type}, we show, under minimal regularity assumptions, the implication
\begin{equation}\label{eq_impl}
|\bF|^p\in L^{q,\varphi}(\Omega)\  \Longrightarrow \  |Du|^p\in L^{q,\varphi}(\Omega)
\end{equation}
for each $q\in[1,\infty)$ and   measurable function $\varphi(x,r)$
satisfying suitable conditions.  Recall that the operator
$\ba(x,\xi)$ is {\it regular}  if it is {\it differentiable and
monotone}  with respect to $\xi\in \R^n$ for a.a. $x\in \R^n.$ The
implication \eqref{eq_impl} has
been  proved first by Calder\'on and Zygmund  for the Poisson
equation and in the framework of  Lebesgue spaces. Since then, there
have been  a lot of works
treating  the validity of this
implication for linear and nonlinear operators and in various
function spaces, see for instance  \cite{B, BP, BP2, BR, BS, BW1,
BW, MP, PS} and the references therein.

In \cite{CE} Chipot and Evans introduced the notion of {\it
asymptotically regular operators} in the elliptic framework. Later,
this notion has been extended to {\it  asymptotically regular
operators with $p$-growth} (see \cite{Ry}). A local
Calder\'on-Zygmund theory and partial Lipschitz regularity for
asymptotically regular elliptic systems have been developed in
\cite{SS1, SS2}, while global results have been
obtained in \cite{F}. These studies
have been extended later to operators satisfying weaker asymptotic
regularity condition of $p$-growth
that allows to consider nonlinear
problems oscillating around some regular problems (see \cite{BOW}).

In the present work,  we  develop
the Calder\'on-Zygmund theory for such operators in the settings of
{\it generalized Morrey spaces}
$L^{q,\varphi}.$   For the description and some properties of these
spaces, see for instance \cite{BS,
Na} and the references therein. It is well known from the classical
theory that if the gradient of a
function belongs to a certain
Morrey space, then the embedding theorems of Morrey and Campanato
(cf. \cite{Cm,Morrey1} imply H\"older regularity of the
function. In \cite{CP} the authors
obtain an analogy of the classical
Sobolev-Campanato embedding theorems but in the framework of
generalized spaces. They prove sharp inclusion relations among
generalized Morrey, Campanato and H\"older spaces,  considering
continuous  weight function $\varphi(r):\R_+\to \R_+,$  satisfying
appropriate  growth conditions.  Restricting
a class of weight functions, we  show as a byproduct that the
Calder\'on-Zygmund estimate implies {\it generalized  H\"older
regularity}  of the same solution.

As mentioned above, we are going to
study the Dirichlet problem for operators ``close" to some regular
operator under minimal regularity assumptions
on the nonlinearity and the domain. For
this goal, we consider domains with boundary satisfying the
Reifenberg condition of flatness \cite{R}. This condition allows to
the boundary to be very rough such that even the unit normal vector
cannot be defined, but it should be flat enough such that it can be
well approximated by $n-1$-dimensional planes.  This structural
condition implies the validity of the internal and external cone
conditions and hence the validity of the fundamental results from
the functional analysis. An example of such a boundary can be {\it
the Koch snowflake} with an angle of the spike $\beta$ such that
$0<\sin\beta<\delta< \frac{1}{8}.$
 A detailed overview of the properties of these domains can be found  in \cite{MT,Toro}.

Throughout the paper, the letter $c$ will denote  a universal
constant that can be explicitly computed in terms of known
quantities. The exact value of $c$ may vary from one occurrence to
another. As usual, $D_i u=\frac{\partial u}{\partial x_i}$, $1 \leq
i \leq n$, and $Du=(D_1u,\ldots,D_nu)$ means the gradient vector
field of $u$. For any bounded domain $\Omega\subset\R^n$, we denote
by $|\Omega|$ the Lebesgue measure of $\Omega$ and by $d_\Omega$ the
diameter of $\Omega.$  Let $\B_r(y)$ be the ball centered in $y\in
\Omega$ and of radius $r\in(0,d_\Omega],$ then
$\Omega_r(y)=\Omega\cap \B_r(y).$ In addition the repeated-index
summation convention is adopted.

\section{Regular $p$-Laplace type  problems}\label{secRP}

Let $\Omega\subset \R^n$,  $n\geq 2$,  be a bounded domain and $p\in(1,\infty)$ be fixed.  We
consider the following Dirichlet  problem
\begin{equation}\label{DP}
\begin{cases}
\divv \ba(x,D u) = \divv (|\bF|^{p-2}\bF) &  \text{ in } \Omega,\\
  u(x)=0 & \text{ on } \partial \Omega,
\end{cases}
\end{equation}
where $\bF=(f^1,\ldots,f^n)\in L^p(\Omega;\R^n)$ is a given
vector-valued function and $\ba =\ba(x,\xi):\R^n\times \R^n\to \R^n$
is a Carath\'eodory map, i.e.,
measurable in $x$ for each $\xi$ and continuous  in $\xi$ for a.a.
$x\in \Omega$.

\begin{defin}\label{defRegular}
The operator $\ba(x,\xi)$  is   regular  if  it    is     differentiable in $\xi$  and   satisfies the following  structural conditions
\begin{equation}\label{str-cond}
\begin{cases}
\ds\gamma |\xi|^{p-2}|\eta|^2\leq \langle D_\xi \ba(x,\xi)\eta,\eta   \rangle,\\[5pt]
\ds |\ba(x,\xi)| + |\xi||D_\xi\ba(x,\xi)|\leq \Lambda |\xi|^{p-1}
\end{cases}
\end{equation}
for each  $0 \neq \xi$, $\eta\in \R^n$, for a.a.   $x\in \R^n,$ and
for some positive  constants $\gamma$ and $\Lambda.$
\end{defin}
The condition \eqref{str-cond} easily implies {\it monotonicity } of
$\ba,$ i.e.,
\begin{align}\label{cond3}
\nonumber
\langle  & \ba(x,\xi)-\ba(x,\eta),  \xi-\eta  \rangle\\[4pt]
&\   \geq
\gamma_1\begin{cases}
\ds|\xi-\eta|^p & \text{  if }  p\geq 2\\[4pt]
\ds |\xi-\eta|^2(1+|\xi|+|\eta|)^{p-2} &   \text{  if   } 1<p<2,
\end{cases}
\end{align}
where $\gamma_1$ depends only on $\gamma, n$ and $p$.

Recall that  a {\it weak solution} of \eqref{DP}  means  a
function $u\in W_0^{1,p}(\Omega),$  $p\in(1,\infty)$ that satisfies
$$
\int_\Omega    \langle \ba(x, Du), D\phi\rangle\, dx= \int_\Omega  \langle |\bF|^{p-2}\bF, D\phi\rangle\, dx
$$
for any $\phi\in W_0^{1,p}(\Omega).$

The {\it unique weak solvability } of \eqref{DP} follows by the  {\it Minty-Browder method } in  $L^p$ that gives the   estimate
\begin{equation}\label{eq1}
\|Du\|_{L^p(\Omega)}\leq c \|\bF\|_{L^p(\Omega)},
\end{equation}
where the constant $c$ depends only on $n$, $p$, $\gamma$, $\lambda$
and $|\Omega|$, see for instance \cite{Ba, Di}.

We suppose that the dependence of $x$ in the nonlinear term $\ba$ is
of small $BMO$ ({\it bounded mean oscillation}) type. In order to describe this regularity, we have to
be able to measure the oscillation of the mapping $x \rightarrow
\frac{\ba(x,\xi)}{|\xi|^{p-1}}$  over balls, uniformly in $\xi \in
\R^n \setminus \{0\}$. To this end, we introduce the function
\begin{equation}\label{theta}
\theta(\ba;r,y)(x)=\sup_{\xi\in \R^n\setminus \{0\}}
\frac{|\ba(x,\xi)-\ol \ba_ {\B_r(y)} (\xi) |} {|\xi|^{p-1}},
\end{equation}
where
$
\ol\ba_{\B_r(y)}(\xi)=\frac1{|\B_r(y)|} \int_{\B_r(y)}\ba(x,\xi)\,
dx
$
is the integral average of $\ba$ over $\B_r(y)$  with respect to  $x$
for any fixed $\xi\in \R^n \setminus\{0\}$.

\begin{defin}
We say that the vector field $\ba$ is $(\delta,R)$-vanishingif
\begin{equation}\label{vanishing}
\sup_{0<r\leq R}\sup_{y\in \R^n}\Xint-_{\B_r(y)} \theta(\ba;
r,y)(x)\, dx\leq \delta.
\end{equation}
\end{defin}

As it concerns the domain $\Omega$, we  impose the following kind of flatness.
\begin{defin}
We say that $\Omega$ is $(\delta,R)$-Reifenberg flat if for every $x_0\in \partial \Omega$ and every $r\in(0,R],$ there exists a coordinate system $\{y_1,\ldots, y_n\}$ centered in $x_0,$ which can depend on $r$ and $x_0,$ so that $x_0=0$ with respect to it and
\begin{equation}\label{Reifenberg}
\B_r(0)\cap\{y_n>\delta r\}\subset B_r(0)\cap \Omega\subset B_r(0)\cap \{y_n>-\delta r\}\,.
\end{equation}
\end{defin}

The problem \eqref{DP} is invariant under scaling and normalization,
as it can be seen by the following lemma (see \cite[Lemma~2.5]{BR}).

\begin{lem}\label{lem1}
 For each $\lambda \geq 1$ and $0<\rho \leq 1$,  we
define the rescaled maps:
\begin{align*}
&\tl \ba(x,\xi)=\frac{\ba(\rho x, \lambda\xi)}{\lambda^{p-1}},\quad \tl\Omega=
\left\{\frac{x}{\rho}: x\in \Omega   \right\},\\
&\tl u(x)=\frac{u(\rho x)}{\lambda x}, \quad \tl \bF(x)=\frac{\bF(\rho
x)}{\lambda}.
\end{align*}
Then
\begin{enumerate}
\item  $\tl u\in  W_0^{1,p}(\tl\Omega)$ is the weak solution of
$$
\divv \tl \ba (x,D\tl u)= \divv( |\tl \bF|^{p-2} \tl \bF)\qquad \text{ in }  \   \tl\Omega.
$$
\item $\tl\ba$ satisfies the structural conditions \eqref{str-cond} with the same constants $\Lambda, \gamma.$
\item $\tl\ba$ is $(\delta, \frac{R}{\rho})$-vanishing.
\item $\tl\Omega$ is $(\delta,\frac{R}{\rho})$-Reifenberg flat.
\end{enumerate}
\end{lem}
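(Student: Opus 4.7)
The plan is to verify the four claims of the lemma by direct change of variables, treating each item separately. Item (2) is purely algebraic; items (3) and (4) reduce to straightforward rescaling of balls; and item (1) requires a genuine test-function argument that reconciles the rescalings of $u$, $\bF$ and $\ba$.

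First, for item (2), I would compute $D_\xi\tilde\ba(x,\xi)=\lambda^{2-p}D_\xi\ba(\rho x,\lambda\xi)$ and insert this into \eqref{str-cond}. The powers of $\lambda$ cancel exactly: for coercivity one gets $\langle D_\xi\tilde\ba(x,\xi)\eta,\eta\rangle\geq \lambda^{2-p}\gamma|\lambda\xi|^{p-2}|\eta|^2=\gamma|\xi|^{p-2}|\eta|^2$, and the upper bounds are handled similarly. So the constants $\gamma,\Lambda$ are literally preserved.

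For item (3), observe that the integral average of $\tilde\ba(\cdot,\xi)$ on $\B_r(y)$ becomes the average of $\ba(\cdot,\lambda\xi)/\lambda^{p-1}$ on $\B_{\rho r}(\rho y)$ after the change of variables $w=\rho z$. Substituting $\eta=\lambda\xi$ in \eqref{theta} then gives the identity $\theta(\tilde\ba;r,y)(x)=\theta(\ba;\rho r,\rho y)(\rho x)$. Another change of variables $w=\rho x$ yields
\begin{equation*}
\Xint-_{\B_r(y)}\theta(\tilde\ba;r,y)(x)\,dx=\Xint-_{\B_{\rho r}(\rho y)}\theta(\ba;\rho r,\rho y)(w)\,dw,
\end{equation*}
and taking the sup over $0<r\leq R/\rho$ and $y\in\R^n$ transforms into the sup over $0<s=\rho r\leq R$ and $z=\rho y\in\R^n$, which is bounded by $\delta$ by \eqref{vanishing}. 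The Reifenberg statement (4) is analogous: any $x_0\in\partial\tilde\Omega$ and $r\in(0,R/\rho]$ correspond to $\rho x_0\in\partial\Omega$ and $\rho r\in(0,R]$; applying \eqref{Reifenberg} to the latter and dividing all coordinates by $\rho$ rescales the inclusion to the one claimed for $\tilde\Omega$ with the same $\delta$.

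The main work is item (1), where I expect the only real bookkeeping difficulty. Interpreting the obvious typo so that $\tilde u(x)=u(\rho x)/(\lambda\rho)$, the chain rule gives $D\tilde u(x)=Du(\rho x)/\lambda$, hence $\tilde\ba(x,D\tilde u(x))=\ba(\rho x,Du(\rho x))/\lambda^{p-1}$ and $|\tilde\bF|^{p-2}\tilde\bF(x)=|\bF(\rho x)|^{p-2}\bF(\rho x)/\lambda^{p-1}$. Given $\tilde\phi\in W_0^{1,p}(\tilde\Omega)$, I would define $\phi(y)=\tilde\phi(y/\rho)\in W_0^{1,p}(\Omega)$, for which $D\phi(y)=\rho^{-1}D\tilde\phi(y/\rho)$. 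Testing the equation for $u$ against $\phi$ and changing variables $y=\rho x$ in both integrals produces a common factor $\rho^{1-n}/\lambda^{p-1}$ on the two sides, which cancels and yields exactly the weak formulation for $\tilde u$. The $W_0^{1,p}(\tilde\Omega)$-regularity of $\tilde u$ and the preservation of the boundary condition both follow from the fact that $x\mapsto\rho x$ is a $C^\infty$ diffeomorphism carrying $\tilde\Omega$ onto $\Omega$. The hardest point is purely notational: keeping the factors $\rho$ and $\lambda$ coherent across the chain rule, the change of variables in the integrals, and the normalization of the test function so that the Jacobian and the $\lambda^{p-1}$ normalization in the definition of $\tilde\ba$ cancel as they should.
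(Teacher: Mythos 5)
Your verification is correct, including the correct reading of the typo in the definition of $\tilde u$ (which should be $\tilde u(x)=u(\rho x)/(\lambda\rho)$ so that $D\tilde u(x)=Du(\rho x)/\lambda$), and the exact cancellation of the powers of $\lambda$ and $\rho$ in each of the four items. The paper gives no proof of this lemma, deferring to \cite[Lemma~2.5]{BR}, and your change-of-variables argument is precisely the standard one used there.
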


Note that thanks to the scaling invariance property, one can take
for simplicity $R=1$ or any other constant bigger than or equal to
$1$. On the other hand, $\delta$ is a small positive constant, say
$0<\delta<1/8$, being invariant under such a scaling argument.

We suppose that $|\bF|^p\in L^{q,\varphi}(\Omega)$ with
$q\in(1,\infty)$  and $\varphi$ satisfying
\eqref{cond1}-\eqref{cond5}, as
described in Section 3 below, which implies that $ |\bF|^p\in
L^{1}(\Omega).$  In fact,
\begin{align*}
\||\bF|^p\|^q_{L^q(\Omega)}=\   &
\varphi(y,r^\ast) \frac1{\varphi(y,r^*)}   \int_\Omega |\bF(x)|^{pq}\,dx \\
\leq\  &   \varphi(0,d)
\||\bF|^p\|^q_{L^{q,\varphi}(\Omega)}.
\end{align*}
Then the H\"{o}lder inequality implies
\begin{equation}\label{existence}
\||\bF|^p\|_{L^1(\Omega)}  \leq |\Omega|^{1-\frac1q}\|
|\bF|^p\|_{L^q(\Omega)} \leq c \||\bF|^p\|_{L^{q,\varphi}(\Omega)},
\end{equation}
which ensures the existence of a
unique weak solution $u\in W_0^{1,p}(\Omega)$  of  \eqref{DP}, where
the constant $c$ depends on $n,p,\varphi, |\Omega|$. Moreover, it is
shown in \cite{BR} that this solution belongs to $W_0^{1,q}(\Omega)$
and we have the following estimate
\begin{equation}\label{estBR}
\||Du|^p\|_{L^q(\Omega)}\leq
c\||\bF|^p\|_{L^q(\Omega)}.
\end{equation}

Our goal is to develop the Calder\'{o}n-Zygmund  theory  for the
problem (\ref{DP}) in the setting  of the generalized Morrey spaces.
Namely,  taking $|\bF|^p\in L^{q,\varphi}(\Omega)$ with
$q\in(1,\infty)$  and $\varphi$ satisfying suitable doubling and
integral conditions, we are going to show that the gradient $|Du|^p$
belongs to the \textit{same} space $L^{q,\varphi}(\Omega)$   with the desired estimate
\eqref{mainestimate} below, which is a correct and natural extension
of \eqref{estBR} in Lebesque spaces to the one in generalized Morrey
spaces.

\section{Generalized Morrey type regularity}\label{sec2}

Let $\varphi=\varphi(y,r):
\R^n\times \R_+ \to \R_+$ be a measurable function
and $1<q<\infty$. The {\it
generalized   Morrey space} $L^{q,\varphi}(\Omega)$ consists of all
measurable functions $f $ for which the following norm  is finite:
$$
\|f\|_{L^{q,\varphi}(\Omega)} =\sup_{y\in\Omega\atop 0<r\leq d_\Omega}\left(\frac1{\varphi(y,r)}
\int_{\Omega_r(y)}|f(x)|^q\,dx  \right)^{\frac1q}
$$
where   $\varphi(y,r)=\varphi(\B_r(y))$ depends on the ball.

We assume that for any fixed $y,z \in \R^n$ and
$r,s>0$, 
there are positive constants $\varkappa_1,\varkappa_2$ and $\varkappa_3,$ independent of
$y,z,r$ and  $s,$ such that
\begin{align}
\label{cond1}
  &\varkappa_1\leq \frac{\varphi(y,s)}{\varphi(y,r)}\leq \varkappa_2, \qquad r\leq s\leq 2r, \   \varkappa_1, \varkappa_2>0,\\
\label{cond3}
&\int_r^\infty \frac{\varphi(y,t)}{t^{n+1}}\, dt \leq \varkappa_3\, \frac{ \varphi(y,r)}{r^{n}}, \qquad \varkappa_3\geq 1, \\
\label{cond5}
&\varphi (y,r)\leq  \varphi(z,s)  \quad \text{ for } \quad \B_r(y)\subset \B_s(z)\,.
\end{align}

Since $\Omega $ is bounded, for any $y\in \Omega$, it holds
$
\sup_{z\in \Omega} |y-z|\leq d_\Omega\,.
$
Hence there exists $r^\ast\leq d_\Omega$ such that $\Omega\subset
\B_{r^\ast}(y) \subset \B_d(0)$ for some $d\geq d_\Omega.$   Then \eqref{cond5} gives that for all  $y\in\Omega,$
$$
\varphi(y,r^*)\leq \varphi(0,d)\,.
$$
Moreover, the monotonicity condition \eqref{cond5} implies (cf. \cite{BS})
\begin{equation}\label{cond6}
\sup_{y\in \Omega \atop{ r>0}}\frac{|\Omega_r(y)|}{\varphi(y,r)}<
\varkappa_4,
\end{equation}
which is equivalent to
$\|\chi_\Omega\|_{L^{q,\varphi}(\Omega)}<\varkappa_4,$ where
$\chi_\Omega$ is the characteristic function of $\Omega.$

\begin{thm}
\label{MainTh} Let $q\in[1,\infty)$ and $\varphi: \R^n\times \R_+
\to \R_+$ be a weight satisfying \eqref{cond1}-\eqref{cond5}. Assume
that  $\ba(x,\xi)$ is regular and $|\bF|^p\in
L^{q,\varphi}(\Omega).$
Then there exists a small positive
constant
$\delta_0=\delta_0(n,p,q,\gamma,L,\varphi)$
such that if $\ba$ and $\Omega$ satisfy \eqref{vanishing} and
\eqref{Reifenberg}, respectively with $\delta_0$,  then  $|Du|^p\in
L^{q,\varphi}(\Omega)$ and we have the following estimate
\begin{equation} \label{mainestimate}
\||Du|^p\|_{L^{q,\varphi}(\Omega)}\leq c
\||\bF|^p\|_{L^{q,\varphi}(\Omega)}
\end{equation}
with  constant $c$ depending on  known quantities.
\end{thm}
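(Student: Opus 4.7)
The plan is to bootstrap the $L^q$ Calder\'on--Zygmund estimate \eqref{estBR} of Byun--Ryu up to the generalized Morrey scale $L^{q,\varphi}$. The overall strategy is to produce, uniformly over balls $\B_r(y)$, a local version of \eqref{estBR} together with a nonlocal tail controlled by the weight $\varphi$; the doubling assumption \eqref{cond1} and the integral condition \eqref{cond3} are precisely what make the tail sum up correctly. The scaling invariance from Lemma~\ref{lem1} fixes once and for all the smallness parameter $\delta_0$ and reduces every localized problem to the unit scale.

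First I would establish a good-$\lambda$ (level-set) inequality adapted to balls. Fix $y\in\Omega$ and $r\in(0,d_\Omega]$. On the enlarged region $\Omega_{2r}(y)$, split $\bF=\bF\chi_{\Omega_{2r}(y)}+\bF\chi_{\Omega\setminus\Omega_{2r}(y)}$ and decompose $u=v+w$, where $v\in W_0^{1,p}(\Omega_{4r}(y))$ carries the ``near'' data and $w$ solves a regular problem whose right-hand side is supported away from $\B_r(y)$. The comparison and higher integrability estimates underlying \cite{BR} (freezing coefficients via the $(\delta,R)$-vanishing of $\ba$ and flattening the boundary via Reifenberg flatness) then yield a uniform good-$\lambda$ inequality
$$
|\{x\in\Omega_r(y): \M(|Du|^p)>N\lambda,\ \M(|\bF|^p)\leq\varepsilon\lambda\}|\leq C\varepsilon^{\sigma}|\{x\in\Omega_{2r}(y): \M(|Du|^p)>\lambda\}|,
$$
valid for some $\sigma>0$ and every small $\varepsilon>0$, with constants that are independent of $y$ and $r$; here $\M$ denotes the Hardy--Littlewood maximal operator.

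Multiplying by $q\lambda^{q-1}$, integrating in $\lambda$ and performing the usual absorption converts this into the local $L^q$ bound
$$
\int_{\Omega_r(y)}|Du|^{pq}\,dx\leq C\int_{\Omega_{2r}(y)}\M(|\bF|^p)^q\,dx.
$$
Dividing by $\varphi(y,r)$, using \eqref{cond1} to replace $\varphi(y,r)$ by $\varphi(y,2r)$ up to a fixed constant, and then taking the supremum over $y$ and $r$ produces
$$
\||Du|^p\|_{L^{q,\varphi}(\Omega)}\leq c\,\|\M(|\bF|^p)\|_{L^{q,\varphi}(\Omega)}.
$$
The remaining step is the boundedness $\M\colon L^{q,\varphi}(\Omega)\to L^{q,\varphi}(\Omega)$. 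For this I would follow a Nakai-type splitting, handling the local scales via the doubling \eqref{cond1} and the far scales via a dyadic decomposition combined with the integral condition \eqref{cond3}, which is the precise Dini-type hypothesis needed to sum the tail $\sum_k |\B_{2^k r}|^{-1}\int_{\B_{2^k r}}|f|$. Chaining the two estimates gives \eqref{mainestimate}.

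The main obstacle will be making the good-$\lambda$ inequality genuinely uniform in $\B_r(y)$. The comparison and higher integrability estimates underlying \eqref{estBR} are stated globally on $\Omega$, so one has to rescale carefully via Lemma~\ref{lem1} with $\rho=r$ so that the smallness constants $\delta_0$, $\varepsilon$ and the multiplicative constant $N$ depend only on $n,p,q,\gamma,\Lambda$ and $\varphi$. Interior and boundary balls must be handled separately, with Reifenberg flatness invoked only on boundary balls after flattening, and merging these two situations into a single scale-invariant level-set inequality is the most delicate point of the argument; once this is in place, the transfer through the weight conditions \eqref{cond1}--\eqref{cond5} is essentially mechanical.
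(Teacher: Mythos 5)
Your overall strategy---a localized level-set estimate followed by boundedness of the maximal operator on $L^{q,\varphi}$---is a legitimate route in principle (it is close to how \cite{BP2} handles Morrey spaces), but it is not the paper's, and as written it has a genuine gap at its central step. The tail-free local estimate
$$
\int_{\Omega_r(y)}|Du|^{pq}\,dx\leq C\int_{\Omega_{2r}(y)}\M(|\bF|^p)^q\,dx,
$$
with $C$ independent of $r$ and $y$, does not follow from the good-$\lambda$ inequality you state and is false in general. First, after multiplying by $q\lambda^{q-1}$ and integrating, the term $C\varepsilon^{\sigma}\int_{\Omega_{2r}(y)}\M(|Du|^p)^q\,dx$ cannot be absorbed into the left-hand side, because the left-hand side lives on the smaller set $\Omega_r(y)$; the absorption in \cite{BR,BW} works only because the level sets there sit in the \emph{same} domain $\Omega$. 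Second, and more fundamentally, any genuinely local comparison argument (truncating $u$ and comparing with a regular problem on $\Omega_{4r}(y)$) produces an energy tail of the form $\bigl(\Xint-_{\Omega_{2r}(y)}|Du|^p\,dx\bigr)^q|\Omega_{2r}(y)|$ on the right-hand side, reflecting the influence on $u|_{\B_r(y)}$ of data outside $\B_{2r}(y)$; controlling that tail in the $L^{q,\varphi}$ norm is exactly the nontrivial part of the argument (this is where \eqref{cond3} and \eqref{cond6} must enter), and your proposal suppresses it by fiat. Even for the Laplacian, the far-field contribution of $\bF$ to $Du$ on $\B_r(y)$ is controlled by $\M(|\bF|^p)$ only up to a factor $\log(d_\Omega/r)$, so no $r$-uniform constant can be extracted this way. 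A secondary issue: your final step requires $\M$ to be bounded on $L^{q,\varphi}(\Omega)$, which fails at the endpoint $q=1$ included in the statement.

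For contrast, the paper performs no localization of the PDE at all. It imports the weighted estimate \eqref{2.17BR} of \cite{BR} as a black box, applies it with the Coifman--Rochberg weight $w=(\M\chi_{\B_r(y)})^{\sigma}\in A_1\subset A_q$, and then expands $\int_{\R^n}|\bF|^{pq}(\M\chi_{\B_r(y)})^{\sigma}\,dx$ over the dyadic annuli $2^{k+1}\B\setminus2^{k}\B$, where $\M\chi_{\B}\sim 2^{-kn}$; iterating the doubling condition \eqref{cond1} yields the geometric series $\sum_k(\varkappa_2/2^{\sigma n})^k$, convergent for $\sigma>\log_2\varkappa_2/n$. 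To salvage your route you would have to either import the weighted estimate as the paper does, or carry the energy tail explicitly through the weight conditions as in \cite{BP2}; the transfer is not ``essentially mechanical.''
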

\begin{proof}
In  \cite{BR}, the authors study the weak solvability  of \eqref{DP}
in the weighted Lebesgue spaces
$L^q_w(\Omega)$ with $w$ a
{\it Muckenhoupt weight.}   Recall that $w$ belongs to the Muckenhoupt class
$A_q,$ $q\in(1,\infty)$, if
\begin{equation}\label{eqMuch}
[w]_q:=\sup_{\B}\left(\frac1{|\B|}\int_{\B}
w(x)\,dx\right)\left(\frac1{|\B|}\int_{\B}w(x)^{-\frac1{q-1}}\,dx
\right)^{q-1} <+\infty,
\end{equation}
where the supremum is taken over all balls $\B\subset \R^n$. If 
$q=1$   we say that $w\in A_1$ if
\begin{equation}
\label{eqMuch1} \frac1{|\B|}\int_{\B}w(x)\ dx\leq
[A]_1 \es_{\B} w(x)
\end{equation}
for some positive constant $[A]_1$.
By  \cite{BR}, we have that if $|\bF|^p\in L^q_w(\Omega)$ with 
 $w\in A_q,$ then $|Du|^p\in L^q_w(\Omega)$ and
verifies  the estimate
\begin{equation}\label{2.17BR}
\int_\Omega |Du|^{pq} w(x)\, dx\leq c \int_\Omega |\bF(x)|^{pq} w(x)\,dx\,.
\end{equation}
For any  Borel set $B$, the  Coifman-Rochberg result (cf. \cite{To})
asserts that    the {\it maximal operator} of the {\it
characteristic function of } $B$
$$
\M\chi_{B}(x)= \sup_{x\in \B}\frac1{|\B|}\int_{\B} \chi_{B}(z)\,dz
$$ 
verifies that $(\M\chi_B)^\sigma\in A_1$ for  any $\sigma\in (0,1).$
Because of the {\it increasing property} of the $A_q$ classes,
i.e., {\it if $w\in A_q$, then
$w\in A_p$ whenever $p>q$}, we have that $(\M\chi_{B})^\sigma\in
A_q$ for each $q>1$.

Let us extend $u$ and $\bF$ as zero outside $\Omega$ and recall that
the assumptions on the regularity of $\partial \Omega $ allow  us to
do it.  Let $y \in \Omega$ and
$r>0$. Then we calculate:
\begin{align}\label{eqI1}
\nonumber
I:= &\  \frac1{\varphi(y,r)}\int_{\Omega_r(y) } |Du(x)|^{pq}\, dx\\
\nonumber
= &\   \frac1{\varphi(y,r)}\int_{\R^n } |Du(x)|^{pq} \chi_{\Omega_r(y)}(x)\, dx \\
\nonumber
\leq  &\   \frac1{\varphi(y,r)}\int_{\R^n } |Du(x)|^{pq} \chi_{\B_r(y)}(x)\, dx \\
\nonumber
\leq &\   \frac1{\varphi(y,r)}\int_{\R^n } |Du(x)|^{pq}(\M{\chi_{\B_r(y)}}(x))^\sigma\ dx \\
\leq &\   \frac{c}{\varphi(y,r)}\int_{\R^n } |\bF(x)|^{pq}  (\M\chi_{\B_r(y)}(x))^\sigma\, dx,
\end{align}
where we have used the fact that
\begin{equation}\label{eqM1}
 \chi_{\B_r(y)}(x)\leq    \M \chi_{\B_r(y)}(x) =
 \sup_{x\in \B_\rho}\frac{|\B_r(y)\cap \B_\rho|}{|\B_\rho|}\leq    \left(  \M \chi_{ \B_r(y) }
 \right)^\sigma(x) \leq 1
\end{equation}
and the estimate \eqref{2.17BR}.

Simplifying the notations by  writing  $\B=\B_r(y)$ and $2\B=\B_{2r}(y),$ we write
 the dyadic decomposition of $\R^n$ related to $\B$  as
$$
\R^n=2\B\cup \left( \bigcup_{k=1}^\infty 2^{k+1}\B\setminus 2^k\B
\right)\,.
$$
Then \eqref{eqI1} becomes
\begin{align}\label{eqI2}
\nonumber
I  \leq &\  \frac{c}{\varphi(y,r)}\int_{2\B} |\bF(x)|^{pq} (\M\chi_{\B}(x))^\sigma\, dx\\
\nonumber
&\  +  c \sum_{k=1}^\infty   \frac{1}{\varphi(y,r)}
\int_{2^{k+1}\B\setminus 2^k\B }   \frac{1}{\varphi(y,r)}
|\bF(x)|^{pq} (\M\chi_{\B}(x))^\sigma\, dx\\
=: &\  c I_0+c\sum_{k=1}^\infty I_k\,.
\end{align}
Let us estimate the maximal function in the above integrals:
\begin{itemize}
\item[$\circ$] If $x\in 2\B,$ then the supremum in \eqref{eqM1}
is attained when $\B\subseteq
\B_\rho$, and whence $\M\chi_\B(x)\leq 1.$
\item[$\circ$]
If $x\in 2^{k+1}\B\setminus 2^k\B,$ then
$$
 \frac{r^n}{(2^{k+1}r+r)^n}\leq \M\chi_{\B}(x)\leq \frac{r^n}{(2^kr+r)^n},
$$
which permits to compare $ \M\chi_{\B}(x) \sim 2^{-kn}$.
\end{itemize}
Then  making use of \eqref{cond1}, we can estimate \eqref{eqI2} in the following way:
\begin{align}
\label{I0}\nonumber
I_0&\  \leq\frac{c}{\varphi(y,r)}\int_{2\B}|\bF(x)|^{pq}\,dx\\
&\  \leq \frac{c\varkappa_2}{\varphi(y,2r)}\int_{2\B}|\bF(x)|^{pq}\,dx\leq
c \||\bF|^p\|^q_{L^{q,\varphi}(\Omega)}\,,\\
\label{Ik}
\nonumber
I_k &\  \leq    \frac{c}{\varphi(r,y)}\int_{2^{k+1}\B\setminus 2^k\B} |\bF(x)|^{pq} \frac{1}{2^{kn\sigma}}\,dx\\
\nonumber
 &\  \leq \frac{c\varkappa_2^{k+1}}{2^{kn\sigma}} \frac1{\varphi(y,2^{k+1}r)} \int_{2^{k+1}\B} |\bF(x)|^{pq} \,dx\\
 &\  \leq  c\varkappa_2 \frac{\varkappa_2^k}{2^{n\sigma k}} \||\bF|^p\|^q_{L^{q, \varphi}(\Omega)}.
\end{align}
Choosing  $\sigma\in(\frac{\log_2 \varkappa_2}{n}<\sigma<1) $ and 
summing up the integrals $I_k$  we
discover
\begin{equation}\label{sumIk}
I\leq c  \sum_{k=0}^{\infty}\left(  \frac{\varkappa_2}{2^{\sigma n}}\right)^k \
\||\bF|^p\|^q_{L^{q,\varphi}(\Omega)}\leq c
\||\bF|^p\|^q_{L^{q,\varphi}(\Omega)},
\end{equation}
where the constant depends on $n,p,q,$ and $|\Omega|$.

Finally  combining  \eqref{eqI1} and \eqref{sumIk} and taking the
supremum with respect to all balls $\B$, we obtain the desired
estimate \eqref{mainestimate}.
\end{proof}

\section{Generalized H\"older regularity}

If we restrict the  class of weight
functions, then we can obtain,
through  the results of \cite{CP},  a {\it generalized H\"older
regularity} of the solution $u$ to
the problem \eqref{DP}. Suppose that $\varphi(r):\R_+\to \R_+$ is
continuous. The generalized H\"older space $C^{0,\varphi}(\Omega)$ consists  of all continuous functions 
$\varphi(r):\R_+\to \R_+$ for which the following seminorm 
$$
[f]_{C^{0,\varphi}(\Omega)}=\sup_{x,y\in\Omega\atop{x\not=y}}\frac{|f(x)-f(y)|}{\varphi(|x-y|)}
$$
is finite. 
Obviously, if $\lim_{r\to 0_+}\varphi(r)=0$ then $ C^{0,\varphi}(\Omega)$ concontains  uniformly continuous
functions.    Using the technique
of the  {\it rearrangement invariant (r.i.) spaces,} Cianchi and
Pick obtained  a correct  condition
on the weight function  for the embedding of the Sobolev spaces into
generalized H\"older spaces. Precisely, the \cite[Theorem 1.3]{CP}
asserts that if $X(\Omega)$ is r.i. space and $\varphi$ is
a strictly positive continuous
function on $(0,\infty)$, then the following assertions are
equivalent:
\begin{itemize}
\item[(i)] A positive constant $c$ exists such that
$$
\|u\|_{C^{0,\varphi}(Q)}\leq c \| Du \|_{X(Q)}
$$
for every $u\in W^1 X(Q);$
\item[(ii)]\quad $\ds \sup_{0<\rho<|Q|}\,
\frac{1}{\varphi\left(\rho^{\frac{1}{n}}\right)} \left\|
t^{-\frac{1}{n'}} \chi_{(0,\rho)}(t)
\right\|_{\bar{X'}(0,|Q|)}<\infty\,.$
\end{itemize}
Recall that if $X(Q)$ is r.i. space, then $\bar{X}(0,|Q|)$ is its
{\it representation space}, while $X'(Q)$ and $\bar{X'}(0,|Q|)$  are
the topological dual spaces, respectively of $X(Q)$ and
$\bar{X}(0,|Q|)$  and $Q$ is a cube in $\R^n.$   Without
loss  of generality, we can take
$Q$ such that $\Omega\subset Q$ and $|Q|=d_\Omega^n.$  In order to
apply \cite[Theorem 1.3]{CP}, we extend $u$ and $\bF$ as zero
outside $\Omega,$ take $X(Q)=L^{pq}(Q) $ and hence
$\bar X (0,|Q|)= L^{pq}(0,d_\Omega^n),$  $\bar X'=(0,|Q|)= L^{\frac{pq}{pq-1}}(0,d_\Omega^n).$  Then the
condition (ii) becomes
$$
\sup_{0<\rho<d_\Omega^n}\frac1{\varphi(\rho^{1/n})} \left[
\int_0^\rho \big(t^{-\frac1{n'}}\big)^{\frac{pq}{pq-1}}\, dt
\right]^{\frac{pq-1}{pq}}<\infty,
$$
where $\frac{1}{n'}=1-\frac1{n}.$ Direct calculations and change of
the variables   $r= \rho^{\frac{1}{n}}$  give  that for
any fixed $n$ and $p$ the condition (ii) becomes 
\begin{equation}\label{cond4}
\sup_{0<r<d_\Omega}\frac{r^{1-\frac{n}{pq}}}{\varphi(r)}<\infty,\qquad
\forall \ q>\max\left\{ 1, \frac{n}{p}\right\}.
\end{equation}
Then if $\varphi(r) $ satisfies \eqref{cond4}, then  it holds
\begin{equation}\label{eq_Holder}
\| u\|_{C^{0,\varphi}(\Omega)}\leq c\|Du\|_{L^{pq}(\Omega)}\leq  c
 \|\bF\|_{L^{pq}(\Omega)}\,.
\end{equation}

\section{Asymptotically regular problems}\label{secARP}

Consider now the following nonlinear elliptic problem
\begin{equation}\label{DPA}
\begin{cases}
\divv \bb(x, Du)=\divv(|\bF|^{p-2}\bF)  & \text{ in } \Omega\\
u=0 & \text{ on } \partial\Omega,
\end{cases}
\end{equation}
where $\bF\in L^{p,\varphi}(\Omega;\R^n)$ is a given vector-valued
function with $p\in (1,\infty)$ and
$\varphi(x,r)$ satisfying  \eqref{cond1}-\eqref{cond5}.

\begin{defin}
Let  $\ba(x,\xi)$ be a regular operator in the sense of the
Definition~\ref{defRegular}. The operator
$\bb(x,\xi):\R^n\times\R^n\to \R^n,$    satisfying  the
Carath\'eodory conditions, is  asymptotically $\delta$-regular with
$\ba(x,\xi)$  if
\begin{equation}\label{regular1}
\limsup_{|\xi|\to \infty}
\frac{|\bb(x,\xi)-\ba(x,\xi)|}{|\xi|^{p-1}}   \leq \delta,
\end{equation}
uniformly with respect to $x\in \R^n.$
\end{defin}
Let us note that the condition \eqref{regular1} implies that
\begin{equation}
\label{regular2}
|\bb(x,\xi)-\ba(x,\xi)|\leq  \omega(|\xi|)(1+ |\xi|^{p-1}) \textrm{
and } \limsup_{r\to\infty}\omega(r)\leq \delta,
\end{equation}
where  $\omega:\R^+\to \R^+$ is a uniformly bounded function,
defined as
\begin{equation}
\label{xi-osc} \omega(|\xi|)=\sup_{x\in  \R^n}\frac{|\bb(x,\xi)-\ba(x,\xi)|}{|\xi|^{p-1}}.
\end{equation}

The notion of  asymptotic regularity has been introduced in
\cite{CE} assuming that $\lim_{r\to\infty}\omega(r)=0$. Later, this
condition have been relaxed in \cite{BOW} taking boundedness of
$\omega(r)$ on infinity,  including such a way oscillating operators
with small oscillation with respect to $\xi,$ as for example,
$$
\bb(x,\xi)= \ba(x,\xi) +\delta
\sin(|\xi|^2)|\xi|^{p-2\xi}
$$
where $\ba(x,\xi)$ is  regular. In order to obtain
Calder\'on-Zygmund   type  estimates
for  the problem \eqref{DPA}, we
need to transform it in a suitable
regular problem for which we can apply the results obtained in
Section~\ref{sec2}. Let us introduce the  operator
$$
\bc(x,\xi)= \frac{\bb(x,\xi)-\ba(x,\xi)}{|\xi|^{p-1}}, \qquad
|\xi|\not= 0.
$$
Obviously, it is a  Carath\'eodory map. Moreover,  $\limsup_{|\xi|\to
\infty}\bc(x,\xi)\leq \delta$, hence there exists
$M(\delta,\omega)>1$ such that $|\bc(x,\xi)|\leq 2\delta$ if
$|\xi|\geq M,$ uniformly with respect to $x\in\Omega$. Since
$\xi \rightarrow \bc(x,\xi)$ is
continuous for each fixed $x$  over the sphere $|\xi|=M$, we can
apply the classical theory \cite{GT} to construct a harmonic
function in the ball $\B_M(0)=\B_M$. Taking the Poisson kernel
$$
\cP(\xi,\eta)=\frac{M^2-|\xi|^2}{M\omega_{n-1}|\xi-\eta|^n} \quad
\left( \xi\in \B_M, \
\eta\in\partial\B_M\right),
$$
we consider the Poisson integral
$$
P[\bc(x, \cdot)](\xi)=\int_{\partial \B_M}\bc(x,\eta)\cP(\xi,\eta)\,
d\sigma_{\eta} \quad  \left(\xi \in \B_M\right).
$$
Then $\xi \rightarrow P[\bc(x,\cdot)](\xi)$  is harmonic function in $\B_M$ with respect to $\xi$ 
 and coincides with $\bc(x,\xi)$ on $\partial \B_M.$ Define
the function
$$
\tl \bc(x,\xi):=\begin{cases}
\bc(x,\xi) & \text{ if }  |\xi|\geq M\\
P[\bc (x,\cdot)](\xi)   & \text{ if
}  |\xi| < M
\end{cases}
$$
which is a  Carath\'eodory map  in
$\R^n\times \R^n$ such that $|\tl \bc(x,\xi)|\leq 2\delta$ for each
$\xi\in \R^n$ by the maximum principle.

\begin{lem}\label{lem3}
Let $u\in W_0^{1,p}(\Omega)$ be a weak solution of the asymptotical
problem \eqref{DPA}. Then by \cite{BOW}, it is a weak solution of
the problem
\begin{equation}\label{DPA1}
\begin{cases}
\divv \tl\bc(x, Du)=\divv(|\bG|^{p-2}\bG)  & \text{ in } \Omega\\
u=0 & \text{ on } \partial\Omega,
\end{cases}
\end{equation}
where $\bG$ is defined by
$$
\bG=\frac{|\bF|^{p-2}\bF +|Du|^{p-1}\chi_{\{|Du|<M\}} (\tl\bc(x,Du))
-\bc(x,Du) } { | |\bF|^{p-2}\bF +|Du|^{p-1}\chi_{\{|Du|<M\}}
(\tl\bc(x,Du)) -\bc(x,Du)|^{\frac{p-2}{p-1}}  }
$$
if the denominator is different from zero, and $\bG=0$ when the
denominator vanishes. Hence $\bG\in L^p(\Omega;\R^n)$ and
$$
\|\bG\|_{L^p(\Omega)}\leq C( \|\bF\|_{L^p(\Omega)} +1 ).
$$
Since  $|\bF|^p\in L^{q,\varphi}(\Omega)$
for some $q\in[1,\infty)$ and any
fixed $p\in(1,\infty)$, $\bF\in L^{pq,\varphi}(\Omega; \R^n)$. We
are going to show that $\bG\in L^{pq,\varphi}(\Omega; \R^n).$
\end{lem}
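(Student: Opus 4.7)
The plan is to derive a pointwise estimate of the form $|\bG|^p \leq c(|\bF|^p + K)$ for a constant $K$ depending only on $M,\delta,p$ and $\|\omega\|_\infty$, and then to integrate this against the Morrey weight. Denoting by $A$ the numerator in the definition of $\bG$, the denominator is exactly $|A|^{(p-2)/(p-1)}$, so directly $|\bG|^{p-2}\bG = A$ and therefore $|\bG|^{p-1}=|A|$. Raising to the power $p/(p-1)\geq 1$ and applying $(a+b)^s \leq 2^{s-1}(a^s+b^s)$ yields
$$|\bG|^p = |A|^{p/(p-1)} \leq c_p\bigl(|\bF|^p + |Du|^p\,\chi_{\{|Du|<M\}}\,|\tl\bc(x,Du)-\bc(x,Du)|^{p/(p-1)}\bigr).$$

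Next I would control the cross term uniformly in $x$. On the set $\{|Du|<M\}$, the maximum-principle bound $|\tl\bc|\leq 2\delta$ (established right before the statement of the lemma) gives $|Du|^{p-1}|\tl\bc(x,Du)|\leq 2\delta M^{p-1}$, while the very definition of $\bc$ together with \eqref{regular2} gives
$$|Du|^{p-1}|\bc(x,Du)| = |\bb(x,Du)-\ba(x,Du)| \leq \|\omega\|_\infty\bigl(1+M^{p-1}\bigr).$$
Hence the cross term is uniformly bounded by a constant $K=K(\delta,\|\omega\|_\infty,p,M)$, and therefore $|\bG|^{pq}\leq c'(|\bF|^{pq}+K^q)$ a.e.\ in $\Omega$.

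Finally, integrating this inequality over $\Omega_r(y)$ and dividing by $\varphi(y,r)$, the $|\bF|^{pq}$-piece is controlled by the definition of the Morrey norm $\|\bF\|_{L^{pq,\varphi}(\Omega)}^{pq}$ (equivalently $\||\bF|^p\|_{L^{q,\varphi}(\Omega)}^q$), while the constant piece is absorbed by the monotonicity consequence \eqref{cond6}, which yields $|\Omega_r(y)|/\varphi(y,r)\leq \varkappa_4$. Taking the supremum over $y\in\Omega$ and $0<r\leq d_\Omega$ produces
$$\|\bG\|_{L^{pq,\varphi}(\Omega)}^{pq} \leq c\,\bigl(\|\bF\|_{L^{pq,\varphi}(\Omega)}^{pq} + K^q \varkappa_4\bigr)<\infty,$$
which is exactly the required conclusion $\bG\in L^{pq,\varphi}(\Omega;\R^n)$.

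The only delicate step is the pointwise bound in the second paragraph: one has to exploit the fact that the weight $|Du|^{p-1}$ placed in front of the correction in the numerator of $\bG$ precisely absorbs the potential singularity of $\bc(x,Du)=(\bb-\ba)/|Du|^{p-1}$ as $|Du|\to 0$, converting $|Du|^{p-1}\bc(x,Du)$ into the uniformly bounded $|\bb(x,Du)-\ba(x,Du)|$ via \eqref{regular2}, and to then further truncate at the level $M$ through $\chi_{\{|Du|<M\}}$. Once this $L^\infty$ bound on the correction is secured, the passage to the generalized Morrey norm is essentially automatic through \eqref{cond6}.
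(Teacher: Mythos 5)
Your proof is correct and follows essentially the same route as the paper: a pointwise bound $|\bG|^p\le c(|\bF|^p+K)$ obtained from $|\tl\bc|\le 2\delta$ and the identity $|Du|^{p-1}|\bc(x,Du)|=|\bb(x,Du)-\ba(x,Du)|\le\|\omega\|_\infty(1+M^{p-1})$ on $\{|Du|<M\}$, followed by integration over $\Omega_r(y)$ and absorption of the constant via \eqref{cond6}. Your explicit justification that $|\bG|^{p-1}=|A|$ is a welcome clarification of a step the paper leaves implicit.
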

\begin{proof}
For $\delta$ small, it  holds $ | \tl\bc(x,Du)  |\leq 2\delta<2$
uniformly with respect to $x$. Then, as in \cite{BOW},
\begin{align*}
&\left||Du|^{p-1}\bc(x,Du)  \right|= |\bb(x,Du)-\ba(x,Du)|\\
&\quad \leq \omega(|Du|)(1+|Du|^{p-1})\leq \|\omega\|_\infty(1+ |Du|^{p-1}  )
\end{align*}
and hence
\begin{equation}\label{eq2}
\left||Du|^{p-1}\chi_{\{|Du|<M  \}}\bc(x,Du)  \right|\leq   2\|
\omega\|_\infty M^{p-1}.
\end{equation}
Accordingly,
 $ |\bG|^p\leq c \left( |\bF|^p+ (1+\| \omega
\|_\infty)^{\frac{p}{p-1}} M^p \right)$. Then
$$
|\bG|^{pq}\leq c\left(
|\bF|^{pq}+(1+\|\omega\|_\infty)^{\frac{pq}{p-1}} M^{pq} \right)
$$
with a constant $c=c(p,q)$. Applying \eqref{cond6}, we get
\begin{align*}
\frac1{\varphi(y,r)}\int_{\Omega_r} |\bG(x)|^{pq}\,dx& \leq
\frac{c}{\varphi(y,r)}\int_{\Omega_r} \bF(x)|^{pq}\, dx+ c\frac{|\Omega_r|}{\varphi(y,r)}\\
&\leq c\left(\|\bF\|^{pq}_{L^{pq,\varphi}} +1  \right)
\end{align*}
with a constant depending on known quantities and
$\|\omega\|_\infty.$ Taking the supremum over $y\in \Omega$ and
$r\in(0,\diam\Omega]$, we get
\begin{equation}\label{eqG}
\|\bG\|^{pq}_{L^{pq,\varphi}(\Omega)}\leq
(\|\bF\|^{pq}_{L^{pq,\varphi}}+1).
\end{equation}
\end{proof}
The desired regularity requirements
of the operator $\tl\bc$ follows by the next lemma that is proved in
\cite{BOW}.
\begin{lem}\label{lem2}
Assume that $\bb(x,\xi)$ is asymptotically $\delta$-regular with $\ba(x,\xi).$ Then
\begin{enumerate}
\item $ {\tl\bc}(x,\xi)$ is regular if $0<\delta<\min \left\{ \frac{\Lambda}{4n(|p-2|+1)},1
\right \}$.
\item    $ {\tl\bc}(x,\xi)$ is $(5\delta,R)$-vanishing.
\end{enumerate}
\end{lem}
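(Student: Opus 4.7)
The plan is to verify the two assertions separately, leveraging the piecewise definition of $\tl\bc$: on $\{|\xi|\ge M\}$ it equals $(\bb-\ba)/|\xi|^{p-1}$, while on $\{|\xi|<M\}$ it is the Poisson integral of those boundary values, hence harmonic and $C^\infty$ in $\xi$. Throughout I read the statements as concerning the composite operator driving \eqref{DPA1}, namely $\bA(x,\xi):=\ba(x,\xi)+|\xi|^{p-1}\tl\bc(x,\xi)$, which coincides with $\bb(x,\xi)$ whenever $|\xi|\ge M$.

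For part (1), I would first record the uniform bound $|\tl\bc(x,\xi)|\le 2\delta$ coming from the maximum principle, together with the asymptotic estimate $|\bc(x,\eta)|\le 2\delta$ on $\partial\B_M$ that follows from \eqref{regular1} and the choice of $M$. Interior gradient estimates for harmonic functions then yield $|D_\xi\tl\bc(x,\xi)|\le c\,\delta$ inside $\B_M$, and a direct application of Leibniz's rule gives
\begin{equation*}
\bigl|D_\xi\bigl(|\xi|^{p-1}\tl\bc(x,\xi)\bigr)\bigr|\le c\,(|p-2|+1)\,\delta\,|\xi|^{p-2}.
\end{equation*}
Combining with \eqref{str-cond} for $\ba$ and choosing $\delta<\Lambda/(4n(|p-2|+1))$ absorbs the perturbation into the ellipticity constant $\gamma$ while keeping an upper bound of $p$-growth; this produces the structural conditions of Definition~\ref{defRegular} for $\bA$ with slightly modified constants.

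For part (2), I would use the triangle inequality inside the supremum of \eqref{theta} to split
\begin{equation*}
\theta(\bA;r,y)(x)\;\le\;\theta(\ba;r,y)(x)+\sup_{\xi\neq 0}\bigl|\tl\bc(x,\xi)-\overline{\tl\bc}_{\B_r(y)}(\xi)\bigr|.
\end{equation*}
Averaging over $\B_r(y)$ with $r\le R$, the first summand contributes at most $\delta$ by the $(\delta,R)$-vanishing assumption on $\ba$, while the second is bounded pointwise by $|\tl\bc(x,\xi)|+|\overline{\tl\bc}_{\B_r(y)}(\xi)|\le 4\delta$ via the uniform $L^\infty$ bound established in part~(1). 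Adding the two contributions yields exactly the $(5\delta,R)$-vanishing condition.

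The principal obstacle is the derivative estimate for $\tl\bc$ near the matching sphere $|\xi|=M$, where the smooth Poisson extension meets the merely Carath\'eodory outer data. Standard interior Poisson gradient estimates degenerate like $(M-|\xi|)^{-1}$ near the boundary, so the crude $c\,\delta$ bound claimed above is not quite automatic; the remedy, as in \cite{BOW}, is either to exploit the factor $|\xi|^{p-1}$ that multiplies $\tl\bc$ in $\bA$ so that any remaining singularity is compensated, or to replace the bare Poisson extension by a slightly mollified variant whose $\xi$-derivative is controlled uniformly on $\R^n$. Once this quantitative derivative bound is available, the rest of the argument reduces to the elementary triangle-inequality splitting used for the vanishing condition.
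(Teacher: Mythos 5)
The paper never proves this lemma; it is quoted from \cite{BOW} with the remark ``that is proved in \cite{BOW}'', so there is no internal argument to compare yours against and your sketch has to stand on its own. Your reading of the statement is the right one: neither assertion can be meant for $\tl\bc$ literally (a bounded function divided by $|\xi|^{p-1}$ in \eqref{theta} blows up as $\xi\to 0$), so the object in question is the composite operator $\bA(x,\xi)=\ba(x,\xi)+|\xi|^{p-1}\tl\bc(x,\xi)$ driving \eqref{DPA1}. Your part (2) is correct and is the standard computation: $\theta(\bA;r,y)(x)\le\theta(\ba;r,y)(x)+\sup_{\xi}|\tl\bc(x,\xi)-\overline{\tl\bc}_{\B_r(y)}(\xi)|\le\theta(\ba;r,y)(x)+4\delta$, and averaging gives $\delta+4\delta=5\delta$; just note that this uses the $(\delta,R)$-vanishing of $\ba$, which is not listed in the lemma's hypotheses but is clearly intended, being exactly how the lemma is invoked in Theorem~\ref{thm2}.

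Part (1) has a genuine gap, and you have identified only the smaller half of it. The degeneration of the interior gradient estimate near $\partial\B_M$ is real, but the decisive obstruction lies on the outer region $\{|\xi|\ge M\}$: there $\tl\bc=\bc$, hence $\bA(x,\xi)=\bb(x,\xi)$, and $\bb$ is assumed only to be a Carath\'eodory map. No differentiability, monotonicity, or even a modulus of continuity in $\xi$ is available for $\bb$, so the two conditions of Definition~\ref{defRegular} --- which require $D_\xi\bA$ to exist pointwise and to satisfy two-sided bounds --- cannot be verified on $\{|\xi|\ge M\}$ no matter how cleverly the extension inside $\B_M$ is arranged. Both of your proposed remedies (exploiting the prefactor $|\xi|^{p-1}$, mollifying the Poisson extension) act only inside $\B_M$ and leave this region untouched; note also that monotonicity of $\bA$ does not follow from the mere $L^\infty$ bound $|\tl\bc|\le 2\delta$, since the increment $\bigl||\xi|^{p-1}\tl\bc(x,\xi)-|\eta|^{p-1}\tl\bc(x,\eta)\bigr|$ is not controlled by $\delta$ times the right-hand side of \eqref{cond3} without a Lipschitz bound on $\tl\bc$. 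What does follow from the construction, and what the downstream argument actually needs, is weaker: the growth bound $|\bA(x,\xi)|\le(\Lambda+2\delta)|\xi|^{p-1}$, coercivity up to a $2\delta|\xi|^{p}$ error, and smallness of the whole perturbation $|\xi|^{p-1}\tl\bc$ in the averaged $\theta$-sense of part (2), so that the comparison estimates of the regular theory can be run against the frozen \emph{regular} operator $\overline{\ba}_{\B_r}$. To prove part (1) as literally stated one would need an extra hypothesis on $\bb$ (for instance an asymptotic condition on $D_\xi\bb-D_\xi\ba$ in the spirit of Chipot--Evans); as it stands, your argument establishes only the relaxed structural statement, and you should say so explicitly rather than claim Definition~\ref{defRegular} for $\bA$.
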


We are now ready to state and prove
our desired Calder\'{o}n-Zygmund theory with the desired estimate
for the asymptotically regular problem \eqref{DPA} in generalized
Morrey spaces.

\begin{thm}
\label{thm2} For any $q\in[1,\infty)$ and $\varphi$ satisfying
\eqref{cond1}-\eqref{cond5}, assume  $|\bF|^p\in
L^{q,\varphi}(\Omega;\R^n)$.  Then
there exists $\delta=\delta(n,p,q, \gamma, L,\varphi)>0$ such that if $\bb(x,\xi)$ is asymptotically
$\delta$-regular with $\ba(x,\xi)$
satisfying and \eqref{vanishing} and
if $\Omega$ satisfies \eqref{Reifenberg}, then any weak solution
$u\in W_0^{1,p}(\Omega)$ of \eqref{DPA}   verifies
\begin{equation}\label{CZAs}  
\| |Du|^p \|_{L^{q,\varphi}(\Omega)}\leq c\left(
\||\bF|^p\|_{L^{q,\varphi}(\Omega)}  +1\right)
\end{equation}
for some positive constant
$c=c(n,p,q, \gamma, L,\varphi,|\Omega|)$.
\begin{proof}
Let $\delta_0$ be the constant from Theorem~\ref{MainTh}  and let
$$
\delta_1=\min\left\{ \frac{\lambda}{4n(|p-2|+1)},1 \right\}.
$$
We set $\delta=\frac15\min\{\delta_0,\delta_1  \}>0$. Let $u\in
W_0^{1,p}(\Omega)$ be a weak solution of \eqref{DPA}.  Since $\ba $
is $(\delta_0,R)$-vanishing, it
follows from   Lemma~\ref{lem2} that $\bc$ is $(5\delta,R)$ vanishing
with $5\delta <\delta_0$ and hence also $(\delta_0,R)$-vanishing.
According to  Lemma~\ref{lem3}, we have that $|\bG|^p\in
L^{q,\varphi}(\Omega)$ hence by the Theorem~\ref{MainTh}, $|Du|^p\in
L^{q,\varphi}(\Omega)$ and the following estimate holds
$$
\||Du|^p\|_{L^{q,\varphi}(\Omega)}\leq c\||\bG|^p\|_{L^{q,\varphi}(\Omega)}\leq
c \left( \||\bF|^p\|_{L^{q,\varphi}(\Omega)}+1 \right)\,.
$$

\end{proof}

\end{thm}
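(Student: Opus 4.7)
The plan is to reduce the asymptotically regular problem \eqref{DPA} to a regular problem of the form studied in Section~\ref{secRP}, and then apply Theorem~\ref{MainTh} directly. The reduction is already prepared in Lemmas~\ref{lem3} and \ref{lem2}, so the task is mainly to check that every hypothesis of Theorem~\ref{MainTh} is met after the transformation.

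First I would fix the threshold $\delta$. Let $\delta_0=\delta_0(n,p,q,\gamma,L,\varphi)$ be the constant produced by Theorem~\ref{MainTh} for the regular problem and let
$$
\delta_1=\min\left\{\frac{\Lambda}{4n(|p-2|+1)},\,1\right\}
$$
be the constant from Lemma~\ref{lem2}(1). Setting $\delta=\tfrac{1}{5}\min\{\delta_0,\delta_1\}$ ensures simultaneously that $\tl{\bc}$ is regular (Lemma~\ref{lem2}(1)) and that $5\delta\leq \delta_0$, so that $\tl{\bc}$ is also $(\delta_0,R)$-vanishing by Lemma~\ref{lem2}(2). Because $\ba$ is assumed $(\delta,R)$-vanishing and $\Omega$ is $(\delta,R)$-Reifenberg flat with this same $\delta\leq\delta_0$, the domain assumption of Theorem~\ref{MainTh} is automatically inherited.

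Next, given a weak solution $u\in W_0^{1,p}(\Omega)$ of \eqref{DPA}, I would apply Lemma~\ref{lem3} to rewrite it as a weak solution of the regular Dirichlet problem \eqref{DPA1} with the operator $\tl{\bc}(x,\xi)$ and with the modified right-hand side $\bG$. Using the pointwise bound
$$
|\bG|^{pq}\leq c\bigl(|\bF|^{pq}+(1+\|\omega\|_\infty)^{\frac{pq}{p-1}}M^{pq}\bigr),
$$
established in the proof of Lemma~\ref{lem3}, together with the consequence \eqref{cond6} of \eqref{cond5}, one gets
$$
\frac{1}{\varphi(y,r)}\int_{\Omega_r(y)}|\bG(x)|^{pq}\,dx \leq c\bigl(\||\bF|^p\|^q_{L^{q,\varphi}(\Omega)}+1\bigr),
$$
and taking the supremum over $y\in\Omega$ and $r\in(0,d_\Omega]$ yields $|\bG|^p\in L^{q,\varphi}(\Omega)$ with $\||\bG|^p\|^q_{L^{q,\varphi}(\Omega)}\leq c(\||\bF|^p\|^q_{L^{q,\varphi}(\Omega)}+1)$. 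This is the quantitative form of \eqref{eqG} that I need.

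Finally, since $\tl{\bc}$ is regular and $(\delta_0,R)$-vanishing, $\Omega$ is $(\delta_0,R)$-Reifenberg flat, and $|\bG|^p\in L^{q,\varphi}(\Omega)$, Theorem~\ref{MainTh} applied to \eqref{DPA1} gives $|Du|^p\in L^{q,\varphi}(\Omega)$ together with
$$
\||Du|^p\|_{L^{q,\varphi}(\Omega)}\leq c\||\bG|^p\|_{L^{q,\varphi}(\Omega)}\leq c\bigl(\||\bF|^p\|_{L^{q,\varphi}(\Omega)}+1\bigr),
$$
which is exactly \eqref{CZAs}. The main subtlety is the first step above, namely the careful coordination of constants $\delta$, $\delta_0$, $\delta_1$ so that the single smallness parameter in the hypothesis of the theorem simultaneously controls the Reifenberg flatness, the BMO-vanishing of $\ba$, and the structural properties of the constructed operator $\tl{\bc}$; once this is arranged, the rest is bookkeeping built on the already established Lemmas~\ref{lem3}, \ref{lem2} and Theorem~\ref{MainTh}.
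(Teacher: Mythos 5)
Your proposal is correct and follows essentially the same route as the paper: fix $\delta=\tfrac15\min\{\delta_0,\delta_1\}$ so that Lemma~\ref{lem2} makes $\tl\bc$ regular and $(\delta_0,R)$-vanishing, use Lemma~\ref{lem3} to recast \eqref{DPA} as the regular problem \eqref{DPA1} with datum $\bG$ satisfying \eqref{eqG}, and then invoke Theorem~\ref{MainTh}. Your write-up is in fact slightly more explicit than the paper's in spelling out the pointwise bound on $|\bG|^{pq}$ and the role of \eqref{cond6}, but the substance is identical.
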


\end{document}